\begin{document}
\setcounter{tocdepth}{1}

\newtheorem{theorem}{Theorem}    
\newtheorem{proposition}[theorem]{Proposition}
\newtheorem{corollary}[theorem]{Corollary}
\newtheorem{lemma}[theorem]{Lemma}
\newtheorem{sublemma}[theorem]{Sublemma}
\newtheorem{conjecture}[theorem]{Conjecture}
\newtheorem{claim}[theorem]{Claim}
\newtheorem{fact}[theorem]{Fact}
\newtheorem{observation}[theorem]{Observation}

\newtheorem{definition}{Definition}
\newtheorem{notation}[definition]{Notation}
\newtheorem{remark}[definition]{Remark}
\newtheorem{question}[definition]{Question}
\newtheorem{questions}[definition]{Questions}
\newtheorem{hypothesis}[definition]{Hypothesis}

\newtheorem{example}{Example}
\newtheorem{problem}[definition]{Problem}
\newtheorem{exercise}[definition]{Exercise}


\def\repair{\medskip\hrule\hrule\medskip}

\def\bff{\mathbf f}
\def\bE{\mathbf E}
\def\bF{\mathbf F}
\def\bK{\mathbf K}
\def\bP{\mathbf P}
\def\bx{\mathbf x}
\def\bi{\mathbf i}
\def\bk{\mathbf k}
\def\bt{\mathbf t}
\def\bc{\mathbf c}
\def\ba{\mathbf a}
\def\bw{\mathbf w}
\def\bh{\mathbf h}
\def\bn{\mathbf n}
\def\bg{\mathbf g}
\def\bc{\mathbf c}
\def\bs{\mathbf s}
\def\bp{\mathbf p}
\def\by{\mathbf y}
\def\bv{\mathbf v}
\def\be{\mathbf e}
\def\bu{\mathbf u}
\def\bm{\mathbf m}
\def\bxi{{\mathbf \xi}}
\def\bR{\mathbf R}
\def\by{\mathbf y}
\def\bz{\mathbf z}
\def\bfb{\mathbf b}
\def\bPhi{{\mathbf\Phi}}

\newcommand{\norm}[1]{ \|  #1 \|}
\newcommand{\Norm}[1]{ \big\|  #1 \big\|}

\def\scriptl{{\mathcal L}}
\def\scriptc{{\mathcal C}}
\def\scriptd{{\mathcal D}}
\def\scrapd{{\mathcal D}}
\def\scripts{{\mathcal S}}
\def\scriptq{{\mathcal Q}}
\def\scriptt{{\mathcal T}}
\def\scriptf{{\mathcal F}}
\def\scriptm{{\mathcal M}}
\def\scripti{{\mathcal I}}
\def\scriptr{{\mathcal R}}
\def\scriptb{{\mathcal B}}
\def\scripte{{\mathcal E}}
\def\scripta{{\mathcal A}}
\def\scriptn{{\mathcal N}}
\def\scriptv{{\mathcal V}}
\def\scriptz{{\mathcal Z}}
\def\scriptj{{\mathcal J}}
\def\scriptk{{\mathcal K}}
\def\scriptg{{\mathcal G}}
\def\scripth{{\mathcal H}}

\def\bk{\mathbf k}
\def\kernel{\operatorname{kernel}}
\def\spann{\operatorname{Span}}
\def\eps{\varepsilon}

\def\reals{\mathbb R}
\def\naturals{\mathbb N}
\def\integers{\mathbb Z}
\def\rationals{\mathbb Q}
\def\one{\mathbf 1}
\def\complex{{\mathbb C}\/}
\def\torus{{\mathbb T}\/}

\def\lt{{L^2}}

\def\three{\mathbf 3}
\def\four{\mathbf 4}

\def\bart{\bar t}
\def\barz{\bar z}
\def\barx{\bar x}
\def\bary{\bar y}
\def\barz{{\bar z}}
\def\bars{\bar s}
\def\barc{\bar c}
\def\baru{\bar u}
\def\barr{\bar r}

\def\distance{\operatorname{distance}}
\def\md{{\mathcal D}}

\def\lsharp{\Lambda^\sharp}
\def\lnatural{\Lambda^\natural}
\def\sS{{\mathbb S}}
\def\barsS{\overline{\mathbb S}}

\title{Examples of H\"older-stable Phase Retrieval}

 \author{Michael Christ}
\address{
        Michael Christ\\
        Department of Mathematics\\
        University of California \\
        Berkeley, CA 94720-3840, USA}
\email{mchrist@berkeley.edu}

 \author{Ben Pineau}
\address{
        Ben Pineau\\
        Department of Mathematics\\
        University of California \\
        Berkeley, CA 94720-3840, USA}
\email{bpineau@berkeley.edu}

 \author{Mitchell A.~Taylor}
\address{
        Mitchell A.~Taylor\\
        Department of Mathematics\\
        University of California \\
        Berkeley, CA 94720-3840, USA}
\email{mitchelltaylor@berkeley.edu}

\date{April 29, 2022. Revised January 31, 2023}

\begin{abstract}
Examples are constructed of infinite-dimensional subspaces 
$V\subset \lt(\mu)$ with the property that for any $f,g\in V$,
if $|f|$ is approximately equal to $|g|$ with respect to
the $\lt$ norm, then there exists a unimodular scalar $z$
such that $f$ is approximately equal to $zg$. 
\end{abstract}

\thanks{Research of the first author was supported in part by NSF grant DMS-1901413}

 \maketitle

Let $(X,\scripta,\mu)$ be a measure space. 
Let $V$ be a closed subspace of the (real or complex) Hilbert space $\lt = \lt(\mu)$.
Calderbank, Daubechies, Freeman, and Freeman
\cite{calderbank_etal} have studied 
subspaces of real-valued $\lt$ for which there exists $C<\infty$ satisfying
\begin{equation} \label{SPR}
\min\big(\norm{f-g}_{\lt},\,\norm{f+g}_\lt\big) \le C\big\|\,|f|-|g|\, \big\|_\lt \ \forall\,f,g\in V,
\end{equation}
and have constructed the first examples of such infinite-dimensional subspaces. 
In this situation, if $|f|$ is known then $f$ is uniquely determined almost everywhere
up to an unavoidably arbitrary global phase factor of $\pm 1$;
if $|f|$ is known within a small tolerance in norm
then up to such a global phase factor, $f$ is determined  within a correspondingly small tolerance.
This issue arises for instance in crystallography, 
where one seeks to recover an unknown function $F\in\lt(\reals)$ from the absolute value 
of its Fourier transform $\widehat{F}$. 
Upon substituting $f = \widehat{F}$ and $g = \widehat{G}$,
then invoking Plancherel's theorem
to express $\norm{F\pm G}_\lt = \norm{\widehat{F}\pm\widehat{G}}_\lt = \norm{f\pm g}_\lt$
and $\big\||\widehat{F}|-|\widehat{G}|\big\|_\lt = \big\||f|-|g|\big\|_\lt$,
the inequality \eqref{SPR} expresses a desirable stability in the recovery
of $F$ from $|\widehat{F}|$.

There is an extensive literature concerning phase retrieval, that is,
determination of $f$ from $|f|$ up to unavoidable ambiguity, 
with an emphasis on finite-dimensional subspaces. 
The first result on uniform stability for phase retrieval was achieved by Cand\`es, Strohmer, and Voroninski \cite{CSV}, who used iid random vectors with uniform distribution on the sphere to produce $n$-dimensional subspaces of $m$-dimensional $\ell^2$-spaces satisfying uniformly stable phase retrieval with   $m$  on the order of $n\log(n)$. This was then improved to $m$ being on the order of $n$ in \cite{CL}.  Phase recovery 
for infinite-dimensional subspaces has been shown to be unstable in general by
Cahill, Casazza, and Daubechies \cite{CCD}
and by Alaifari and Grohs \cite{AG}.
We refer to Grohs et.\ al.\ \cite{grohs_K_R}
for an expository article on phase recovery,
and to Calderbank et.\ al.\ \cite{calderbank_etal} for an introduction to the
specific topic of stability for infinite-dimensional subspaces.
The present note develops simple 
examples of infinite-dimensional subspaces in which versions of stable phase retrieval hold. 
These examples include certain variants of Rademacher series and lacunary Fourier series.

For complex-valued functions, the natural quantity on the left-hand
side of the inequality \eqref{SPR} becomes $\min_{|z|=1}\norm{f-zg}_\lt$,
with the minimum taken over all complex numbers $z$ of modulus $1$.
Following Calderbank et.~al. \cite{calderbank_etal}, 
we say that a subspace $V$ of a complex $L_2$-space satisfies stable phase retrieval if
there exists $C<\infty$ such that
\begin{equation} \label{SPR_complex}
\min_{|z|=1}\norm{f-zg}_\lt \le C\big\|\,|f|-|g|\, \big\|_\lt \ \forall\,f,g\in V.
\end{equation}
We generalize the stable phase retrieval inequality in the following way.

\begin{definition}
Let $p\in [1,\infty]$ and let $V$ be 
a subset
of the complex Banach space $L^p(\mu)$ for some measure $\mu$. 
We say that $V$ satisfies $L^p$-H\"older-stable phase retrieval
if there exist parameters $\gamma\in(0,1]$ and $C<\infty$ such that 
\begin{equation} \label{gSPR}
\min_{|z|=1} \norm{f-zg}_{L^p} \le C\big\|\, |f|-|g|\, \big\|_{L^p}^\gamma \cdot
(\norm{f}_{L^p} + \norm{g}_{L^p})^{1-\gamma} \ \ \forall\,f,g\in V.
\end{equation}
We say that $V$ satisfies $L^p$-stable phase retrieval if \eqref{gSPR} holds with $\gamma=1$.
\end{definition}

Stable phase retrieval in the sense \eqref{SPR_complex} is thus 
$L^p$-stable phase retrieval for $p=2$.
The notion of H\"older-stable phase retrieval for subsets has appeared in
work of Cahill, Casazza, and Daubechies \cite{CCD}.
We are primarily interested in subspaces $V$, but in Example~\ref{nonlinear example} below,
$V$ is not a subspace.

We will abbreviate, writing $L^p$-H\"older-SPR and $L^p$-SPR,
and occasionally writing $L^p$-Lipschitz-SPR as a synonym for $L^p$-SPR.
For real Hilbert spaces $\lt(\mu,\reals)$, this definition is modified 
by replacing $\{z\in\complex: |z|=1\}$ by $\{\pm1\}$. We will write
``real $L^p$-SPR''. Only the exponents $p=2,4$ arise in the examples below. 

By defining the equivalence relation $\sim$ on a subspace $V$ by $f\sim g$ if and only if $f=zg$ for some unimodular scalar $z$, we see that $\min_{|z|=1}\|f-zg\|_{L^p}$ is exactly the distance between $f$ and $g$ in the quotient space $V/\sim$. In particular, $V$ satisfies $L^p$-SPR with constant $C$ if and only if the recovery map of $f\in V/\sim$ from $|f|$ is well-defined and $C$-Lipschitz.

Some of our proofs only directly establish
$\lt$-H\"older-SPR with certain specific exponents $\gamma<1$, rather than the formally stronger
property of $\lt$-Lipschitz-SPR.  However, the second and third authors together with Freeman and Oikhberg  have proved 
 \cite[Corollary 3.12]{pineau-taylor}  for both the real and the complex cases
that for any exponent $p\in[1,\infty]$,
for subspaces $V$, $L^p$-H\"older-SPR implies $L^p$-Lipschitz-SPR. 
We will exploit this general result to upgrade conclusions from $\lt$-H\"older-SPR to
$\lt$-Lipschitz-SPR.

\medskip
Let $\mu$ be a probability measure. 
Consider an orthonormal subset $\{r_j: j\in\naturals\}$ 
of the complex Hilbert space $\lt = \lt(\mu) = \lt(\mu,\complex)$.
Let $V\subset\lt$ be the closure of the span of $\{r_j\}$ over $\complex$.
Let $\one$ be the function $\one(x)\equiv 1$.
Define associated functions \begin{equation} s_j = |r_j|^2-\one. \end{equation}

In the case of $\lt(\mu,\complex)$, we consider closed subspaces spanned by
orthogonal sets $\{r_j: j\in\naturals\}$ satisfying the following three hypotheses:
\begin{align}
&
\big\{\one,\, s_i,\, r_j\overline{r_k}: i,j,k\in\naturals \text{ and } j\ne k\big\}
\text{ is an orthogonal set.}
	\label{H1}
\\& \sup_j \norm{r_j}_{L^4}<\infty. \label{H2}
\\&
\text{There exists $\delta>0$ such that }
\inf_i \norm{r_i}_4^4 \ge 1+\delta \text{ and }  
\inf_{j\ne k} \norm{r_j\overline{r_k}}_2^2 \ge \delta. 
	\label{H3}
\end{align}

Since $\norm{s_i}_2^2 = \norm{r_i}_4^4-2\norm{r_i}_2^2+1 = \norm{r_i}_4^4-1$
by the hypothesis that $\norm{r_i}_2=1$,
the first part of hypothesis \eqref{H3} can be equivalently restated
as $\norm{s_i}_2^2\ge\delta>0$.

A consequence of these hypotheses is that
$V\subset L^4$ and there exists $C<\infty$ such that
\begin{equation} \label{L4bound} 
\norm{f}_{L^4}\le C\norm{f}_{L^2}\ \forall\,f\in V.  \end{equation}
Indeed,
if $f = \sum_k a_k r_k$ with $(a_k: k\in\naturals)\in\ell^2$ then
$|f|^2$ is represented as the pairwise orthogonal sum
\begin{equation} \label{expansion} |f|^2 = \sum_{i\ne j} a_i\overline{a_j}r_i\overline{r_j}
+ \sum_k |a_k|^2 s_k  + \norm{f}_2^2\cdot\one.\end{equation}
The $L^4$ norm bound follows using orthogonality and the Cauchy-Schwarz inequality, 
since $\norm{r_i\overline{r_j}}_2\le \norm{r_i}_4\norm{r_j}_4$
and $\norm{s_k}_2 \le 1+\norm{r_k^2}_2\le 1+\norm{r_k}_4^2$
are uniformly bounded by \eqref{H2}.
The inequality \eqref{L4bound}, and a similar $L^6$ norm inequality that
holds under stronger hypotheses, are pillars of our reasoning.

Let $\{r_j\}\subset\lt(\mu,\complex)$ be an orthonormal set of complex-valued functions
satisfying hypotheses \eqref{H1},\eqref{H2},\eqref{H3}, and let $V$ be as above. 
We begin by observing that 
$|f|$ determines $f$ uniquely, up to multiplication  by a unimodular complex scalar,
for each $f\in V$. Indeed,
$|f|$ certainly determines $f$ if $|f|=0$ almost everywhere.
Consider next any $0\ne f\in V$. Expand $f = \sum_k a_k r_k$, 
with $a\in\ell^2$. Then $|f|^2\in\lt$, and has expansion \eqref{expansion}.
The terms of this sum are mutually orthogonal, and the series converges in $\lt$ norm.
Therefore $|f|^2$ determines each of the coefficients in this expansion;
it determines each $|a_n|^2$ and each  product $a_i\overline{a_j}$.
Choose some $n_0$ satisfying $a_{n_0}\ne 0$.
Writing $a_n = |a_n|e^{i\arg(a_n)}$,
$\arg(a_n)-\arg(a_{n_0})$ is determined 
modulo $2\pi\integers$
by $|a_n|^2$, $|a_{n_0}|^2$, and $a_n\overline{a_{n_0}}$.
Therefore $|f|^2$ and $\arg(a_{n_0})$ together determine all coefficients $a_n$,
and hence determine $f$, up to multiplication by $z = e^{i\arg(a_{n_0})}$.

Note that this reconstruction of $f$ from $|f|$ is not stable in the sense
desired, since it requires division by $|a_{n_0}|$, for which no {\em a priori}
positive lower bound is available. Note also that it exploits
only the coefficients of $s_k$ and of $r_n\overline r_{n_0}$.

The next result asserts that under these same hypotheses, the reconstruction
of $f$ from $|f|$ can be done stably.

\begin{proposition} \label{prop1}
Let $\mu$ be a probability measure.
Let $\{r_j\}\subset\lt(\mu,\complex)$ be an orthonormal set of complex-valued functions
satisfying hypotheses \eqref{H1},\eqref{H2},\eqref{H3}. 
Then $V$ satisfies $L^4$-SPR. 
\end{proposition}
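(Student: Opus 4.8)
The plan is to reduce the $L^4$ inequality \eqref{gSPR} (with $\gamma=1$, $p=4$) to an elementary algebraic inequality for the coefficient sequences, by exploiting the orthogonal expansion \eqref{expansion} together with the comparison \eqref{L4bound}. Fix $f=\sum_k a_k r_k$ and $g=\sum_k b_k r_k$ in $V$, with $a,b\in\ell^2$; we may assume $f,g$ are not both $0$. Write $A=\norm{f}_2^2=\sum_k|a_k|^2$, $B=\norm{g}_2^2=\sum_k|b_k|^2$, and $\sigma=\big|\sum_k\overline{a_k}b_k\big|$. Two routine reductions set things up. On the left, since $f-zg\in V$ for every unimodular $z$, applying \eqref{L4bound} to the $L^2$-minimizer gives $\min_{|z|=1}\norm{f-zg}_{4}\le C\min_{|z|=1}\norm{f-zg}_{2}$. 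On the right, factoring $|f|^2-|g|^2=(|f|-|g|)(|f|+|g|)$ and using Cauchy--Schwarz followed by \eqref{L4bound},
\begin{equation*}
\norm{\,|f|^2-|g|^2\,}_{2}\le \norm{\,|f|-|g|\,}_{4}\,\norm{\,|f|+|g|\,}_{4}\le C\,\norm{\,|f|-|g|\,}_{4}\,(\norm{f}_2+\norm{g}_2).
\end{equation*}
Combining the two, the Proposition will follow once I establish the key inequality
\begin{equation}\label{key}
\min_{|z|=1}\norm{f-zg}_{2}\cdot(\norm{f}_2+\norm{g}_2)\le C'\,\norm{\,|f|^2-|g|^2\,}_{2}.
\end{equation}

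To prove \eqref{key} I would translate each side into the coefficients. A direct computation gives $\min_{|z|=1}\norm{f-zg}_{2}^2=A+B-2\sigma$. For the right-hand side I apply \eqref{expansion} to $f$ and to $g$ and subtract; the $\one$-, $s_k$-, and $r_i\overline{r_j}$-components are mutually orthogonal by \eqref{H1}, so by the lower bounds $\norm{s_k}_2^2\ge\delta$ and $\norm{r_i\overline{r_j}}_2^2\ge\delta$ $(j\ne k)$ from \eqref{H3},
\begin{equation*}
\norm{\,|f|^2-|g|^2\,}_{2}^2\ge \delta\Big(\sum_{i\ne j}|a_i\overline{a_j}-b_i\overline{b_j}|^2+\sum_k(|a_k|^2-|b_k|^2)^2\Big)=\delta\,(A^2+B^2-2\sigma^2),
\end{equation*}
where the bracketed sum is exactly $\norm{a\otimes\overline a-b\otimes\overline b}_{\mathrm{HS}}^2$, the squared Hilbert--Schmidt norm of the difference of the rank-one arrays $(a_i\overline{a_j})$ and $(b_i\overline{b_j})$. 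After squaring \eqref{key} and using $(\norm{f}_2+\norm{g}_2)^2\le 2(A+B)$, it therefore suffices to prove the purely algebraic statement that
\begin{equation*}
(A+B)(A+B-2\sigma)\le 2\,(A^2+B^2-2\sigma^2)
\end{equation*}
for all $A,B\ge0$ and $0\le\sigma\le\sqrt{AB}$.

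I expect this algebraic inequality to look like the crux, but it is in fact immediate: from $2(A^2+B^2)\ge(A+B)^2$ and, by Cauchy--Schwarz, $2\sigma\le A+B$, one obtains
\begin{equation*}
2\,(A^2+B^2-2\sigma^2)\ge (A+B)^2-4\sigma^2\ge (A+B)^2-2\sigma(A+B)=(A+B)(A+B-2\sigma).
\end{equation*}
Tracking constants yields \eqref{key} with $C'=2\delta^{-1/2}$, and hence \eqref{gSPR} with $\gamma=1$ and $p=4$. The genuine content is thus not the final algebra but the two structural inputs preceding it: the orthogonality \eqref{H1} together with the uniform lower bounds \eqref{H3}, which convert the $L^2$ size of $|f|^2-|g|^2$ into control of the Hilbert--Schmidt distance between the rank-one arrays; and the recognition, made transparent by the displayed inequality, that this Hilbert--Schmidt distance dominates $\min_{|z|=1}\norm{f-zg}_{2}\cdot(\norm{f}_2+\norm{g}_2)$. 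The main obstacle, then, is finding this reformulation; once it is in hand, only Cauchy--Schwarz and \eqref{L4bound} remain.
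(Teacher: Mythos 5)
Your proof is correct. The structural backbone is the same as the paper's: you use the orthogonal expansion \eqref{expansion} with hypotheses \eqref{H1} and \eqref{H3} to bound $\norm{|f|^2-|g|^2}_2^2$ from below by coefficient data, the factorization $|f|^2-|g|^2=(|f|-|g|)(|f|+|g|)$ with Cauchy--Schwarz and \eqref{L4bound} to bound it from above by $\norm{|f|-|g|}_4$, and \eqref{L4bound} again to upgrade the resulting $L^2$ control of $f-zg$ to $L^4$. Where you genuinely diverge is in the endgame. The paper isolates Lemma~\ref{lemma:lemma}, which records the lower bound in the form $\delta\big[\norm{f}_2^2\norm{g}_2^2-|\langle f,g\rangle|^2\big]+(\norm{f}_2^2-\norm{g}_2^2)^2$, then normalizes $\norm{g}_2=1$, decomposes $f=re^{i\theta}g+h$ with $h\perp g$, and extracts $\norm{h}_2^2+(1-r)^2$ through a short but slightly fiddly completion-of-squares argument. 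You instead keep the full sum $\sum_{i,j}|a_i\overline{a_j}-b_i\overline{b_j}|^2=A^2+B^2-2\sigma^2$ (the Hilbert--Schmidt distance of the rank-one arrays), observe that $\min_{|z|=1}\norm{f-zg}_2^2=A+B-2\sigma$ exactly, and close with the two-line scalar inequality $(A+B)(A+B-2\sigma)\le 2(A^2+B^2-2\sigma^2)$. This buys you a homogeneous argument with no normalization, no orthogonal decomposition of $f$ against $g$, and an explicit clean constant $2C^2\delta^{-1/2}$; what you give up is only the slightly sharper bookkeeping of the $(\norm{f}_2^2-\norm{g}_2^2)^2$ term, which the paper's lemma keeps with coefficient $1$ rather than your $\delta$ --- immaterial for the conclusion. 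All the individual steps check out, including the reduction $\min_z\norm{f-zg}_4\le C\min_z\norm{f-zg}_2$ and the use of $2\sigma\le A+B$ from Cauchy--Schwarz.
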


Under a supplementary hypothesis, 
Proposition~\ref{prop1} has an almost immediate implication for $\lt$-stable phase retrieval.

\begin{corollary} \label{cor:L6}
Let $\{r_n\}$ satisfy the hypotheses of Proposition~\ref{prop1}.
Assume also that there exist $q>4$ and $C<\infty$ such that
$V\subset L^q(\mu)$ and
\begin{equation} \label{RH} \norm{f}_{L^q} \le C\norm{f}_{L^2} \ \forall\, f\in V.\end{equation}
Then $V$ satisfies $\lt$-stable phase retrieval.
\end{corollary}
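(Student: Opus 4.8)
The plan is to deduce $\lt$-stable phase retrieval from the $L^4$-SPR already furnished by Proposition~\ref{prop1}, using the supplementary reverse-H\"older bound \eqref{RH} to descend from the $L^4$ to the $L^2$ scale, and finally to invoke the upgrade result \cite[Corollary 3.12]{pineau-taylor}. The first step I would take is purely a matter of sorting out which norm controls which. Because $\mu$ is a probability measure, $\norm{h}_{\lt}\le\norm{h}_{L^4}$ for every $h$; applying this with $h=f-zg$ and taking the infimum over unimodular $z$ would give
$$\min_{|z|=1}\norm{f-zg}_{\lt}\le \min_{|z|=1}\norm{f-zg}_{L^4}\le C\,\big\||f|-|g|\big\|_{L^4} \quad\forall\,f,g\in V,$$
the last inequality being exactly the $L^4$-SPR of Proposition~\ref{prop1}. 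It would then remain only to bound $\big\||f|-|g|\big\|_{L^4}$ by a positive power of $\big\||f|-|g|\big\|_{\lt}$.

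The key observation I would rely on is that, although $|f|-|g|$ need not lie in $V$, its $L^q$ norm is nonetheless controlled by the $\lt$ norms of $f$ and $g$ through the triangle inequality together with \eqref{RH}. Indeed, the pointwise estimate $\big||f|-|g|\big|\le |f|+|g|$ yields $\big\||f|-|g|\big\|_{L^q}\le\norm{f}_{L^q}+\norm{g}_{L^q}$, and since $f,g\in V$ the hypothesis \eqref{RH} gives $\big\||f|-|g|\big\|_{L^q}\le C(\norm{f}_{\lt}+\norm{g}_{\lt})$. Interpolating the $L^4$ norm between $L^2$ and $L^q$ — writing $\tfrac14=\tfrac{\theta}{2}+\tfrac{1-\theta}{q}$, so that $\theta=\tfrac{q-4}{2(q-2)}\in(0,1)$ — would then produce
$$\big\||f|-|g|\big\|_{L^4}\le\big\||f|-|g|\big\|_{\lt}^{\theta}\,\big\||f|-|g|\big\|_{L^q}^{1-\theta}\le C'\,\big\||f|-|g|\big\|_{\lt}^{\theta}\,(\norm{f}_{\lt}+\norm{g}_{\lt})^{1-\theta}.$$

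Combining the two displays would show that $V$ satisfies $\lt$-H\"older-SPR in the sense of \eqref{gSPR}, with exponent $\gamma=\theta$. Since $V$ is a subspace, \cite[Corollary 3.12]{pineau-taylor} upgrades this $\lt$-H\"older-SPR to $\lt$-Lipschitz-SPR, which is precisely $\lt$-stable phase retrieval, completing the argument. I expect no serious obstacle: the whole proof is a one-line interpolation bracketed by two known facts. The only mildly nonobvious point — and the step I would take the most care over — is the routing of the uncontrolled quantity $|f|-|g|$ through the triangle inequality, so that \eqref{RH}, which applies only to members of $V$, can be brought to bear on $f$ and $g$ separately; and the hypothesis $q>4$ is exactly what forces $\theta>0$, making the interpolation nontrivial.
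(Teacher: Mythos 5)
Your proposal is correct and follows essentially the same route as the paper: interpolate the $L^4$ norm of $|f|-|g|$ between $L^2$ and $L^q$ with $\tfrac14=\tfrac{\theta}{2}+\tfrac{1-\theta}{q}$, control the $L^q$ factor via the triangle inequality and \eqref{RH}, chain this with the $L^4$-SPR of Proposition~\ref{prop1} and the trivial embedding $L^4\subset L^2$ for a probability measure, and finally upgrade the resulting $\lt$-H\"older-SPR to $\lt$-Lipschitz-SPR via \cite[Corollary 3.12]{pineau-taylor}. No gaps.
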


Proposition~\ref{prop1} and Corollary~\ref{cor:L6} will be proved below.

As is well known,
for any even integer $q\ge 6$, the inequality \eqref{RH} holds whenever
the functions $r_j$ are independent 
random variables, have uniformly bounded $L^q$ norms, and satisfy $r_j\perp\one$.
Indeed, consider the case $q=6$. If $\norm{r_n}_6\le A<\infty$ for all $n$  then
\begin{align*}
\norm{\sum_n a_n r_n}_6^6
	&= \sum_{i_1,i_2,i_3}
 \sum_{j_1,j_2,j_3}
\prod_{k=1}^3 a_{i_k}\,
\prod_{l=1}^3 \overline{a_{j_l}}\,\,
	\Big\langle r_{i_1}r_{i_2}r_{i_3},\,
	{r_{j_1}} {r_{j_2}} {r_{j_3}}\Big\rangle
\\& \le 
\sum_n |a_n|^6 A^6
	+ \binom{6}{2}A^6\sum_m\sum_n |a_m|^4 |a_n|^2
	+ \binom{6}{3} A^6\sum_m\sum_n |a_m|^3 |a_n|^3
\end{align*}
since 
$\Big\langle r_{i_1}r_{i_2}r_{i_3},\,
{r_{j_1}} {r_{j_2}} {r_{j_3}}\Big\rangle =0$
unless each of the six indices that appear in the inner product, appears at least twice.
The same reasoning applies for arbitrary even integers $q\ge 8$. 

We next present a class of examples 
based on Proposition~\ref{prop1} and Corollary~\ref{cor:L6}.
The construction involves sums of independent random variables, and may
be contrasted with a more elaborate construction in \cite{calderbank_etal},
which combines independent summands with summands having pairwise disjoint supports.

\begin{example} \label{example1}
Let $\mu$ be a probability measure.
Let $r_n$ be independent identically distributed complex-valued random variables in $L^6(\mu)$
satisfying $\norm{r_n}_{\lt}=1$. 
Assume that 
\begin{align} 
&r_n\perp\one  \text{ and } r_n^2\perp \one
\\ &\mu(\{x: |r_n(x)|\ne 1\})>0.  
\label{no_rademacher}
\end{align} 
Then $\{r_n\}$ satisfies the hypotheses of Proposition~\ref{prop1}, 
and satisfies those  of Corollary~\ref{cor:L6} with $q=6$.
Therefore the closure of its span in $L^2(\mu)$  satisfies both $L^4$-SPR and $L^2$-SPR.
\end{example}

Example~\ref{example1} does not apply to Rademacher series,
for which $r_n = \pm1$ each with probability $\tfrac12$, 
violating hypothesis \eqref{no_rademacher}.
Nor do Rademacher series satisfy phase retrieval,
since $|r_m|\equiv |r_n|$ for all $m,n$.

In the formulation of Example~\ref{example1},
the hypothesis $r_n^2\perp \one$, together with independence, 
ensure that $r_i\overline{r_j}\perp r_j\overline{r_i}$ whenever $i\ne j$, since
\[ \langle r_i\overline{r_j},\,r_j\overline{r_i}\rangle 
= \int r_i^2\, \overline{r_j}^2\,d\mu
= \int r_i^2\,d\mu\cdot \overline{\int r_j^2\,d\mu}
= \langle r_i^2,\one\rangle \cdot \overline{\langle r_j^2,\one\rangle} =0.\]
The hypothesis that $|r_n|$ is not equal almost everywhere to $1$
ensures that $\norm{s_n}_2\ne 0$.
The other hypotheses of Proposition~\ref{prop1}, and the 
embedding of $V$ into $L^6$, are consequences
of independence, identical distribution, and the assumption that $r_n\perp\one$.
Details of the verifications are left to the reader.

\medskip
Before indicating other classes of examples with stable phase retrieval,
we prove Corollary~\ref{cor:L6} and Proposition~\ref{prop1}.

\begin{proof}[Proof of Corollary~\ref{cor:L6}]
By H\"older's inequality,
\[ \norm{|f|-|g|}_4
\le \norm{|f|-|g|}_2^{\theta} (\norm{f}_q + \norm{g}_q)^{1-\theta}
\le C^{1-\theta}\norm{|f|-|g|}_2^{\theta} (\norm{f}_2 + \norm{g}_2)^{1-\theta}\]
where $\theta\in (0,1)$ is defined by the relation
$\tfrac14 = \tfrac{\theta}2 + \tfrac{1-\theta}{q}$.
Therefore for any $f,g\in V$, by H\"older's inequality and Proposition~\ref{prop1},
\begin{align*}
\min_{|z|=1} \norm{f-zg}_2
		\le
\min_{|z|=1} \norm{f-zg}_4
\le C' \norm{|f|-|g|}_4
\le C''\norm{|f|-|g|}_2^{\theta} (\norm{f}_2 + \norm{g}_2)^{1-\theta}.
	\end{align*}
Thus $L^2$-H\"older SPR holds. $L^2$-Lipschitz SPR follows from \cite[Corollary 3.12]{pineau-taylor}.
\end{proof}

The proof of Proposition~\ref{prop1} relies on the following elementary inequality.

\begin{lemma} \label{lemma:lemma}
Let $\{r_j\}$ satisfy hypotheses \eqref{H1}, \eqref{H2}, and \eqref{H3}.
For any $f,g\in V$,
\begin{equation} \label{lemma3_inequality}
\norm{|f|^2-|g|^2}_2^2 \ge 
\delta \Big[ \norm{f}_2^2 \norm{g}_2^2 - |\langle f,g\rangle|^2 \Big]
+ (\norm{f}_2^2-\norm{g}_2^2)^2.
\end{equation}
\end{lemma}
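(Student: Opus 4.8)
The plan is to reduce everything to the orthogonal expansion \eqref{expansion} and then to an elementary finite-dimensional inequality. Write $f=\sum_k a_kr_k$ and $g=\sum_k b_kr_k$ with $a,b\in\ell^2$; since $V\subset L^4$ by \eqref{L4bound}, both $|f|^2$ and $|g|^2$ lie in $\lt$ and admit the expansion \eqref{expansion}. Subtracting,
\[
|f|^2-|g|^2=\sum_{i\ne j}(a_i\overline{a_j}-b_i\overline{b_j})\,r_i\overline{r_j}+\sum_k(|a_k|^2-|b_k|^2)\,s_k+(\norm{f}_2^2-\norm{g}_2^2)\,\one.
\]
By hypothesis \eqref{H1} the three families $\{r_i\overline{r_j}\}_{i\ne j}$, $\{s_k\}_k$, and $\{\one\}$ are mutually orthogonal and orthogonal within themselves, so the squared norm splits as a sum of three pieces with no cross terms. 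Because $\mu$ is a probability measure, $\norm{\one}_2=1$, so the last piece contributes exactly $(\norm{f}_2^2-\norm{g}_2^2)^2$, already matching the final term of \eqref{lemma3_inequality}.

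Next I would insert the lower bounds from \eqref{H3}, namely $\norm{r_i\overline{r_j}}_2^2\ge\delta$ for $i\ne j$ and $\norm{s_k}_2^2=\norm{r_k}_4^4-1\ge\delta$, to obtain
\[
\norm{|f|^2-|g|^2}_2^2\ \ge\ \delta\Big[\sum_{i\ne j}|a_i\overline{a_j}-b_i\overline{b_j}|^2+\sum_k(|a_k|^2-|b_k|^2)^2\Big]+(\norm{f}_2^2-\norm{g}_2^2)^2.
\]
The bracketed quantity is precisely the squared Hilbert--Schmidt norm of the difference of the rank-one Hermitian matrices $aa^*$ and $bb^*$, the diagonal terms $(|a_k|^2-|b_k|^2)^2$ supplying the $i=j$ entries. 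A short trace computation, using $(aa^*)^2=\norm{a}_2^2\,aa^*$ and cyclicity of the trace, evaluates this as $\norm{a}_2^4+\norm{b}_2^4-2|\langle a,b\rangle|^2$, where $\langle a,b\rangle=\sum_k a_k\overline{b_k}=\langle f,g\rangle$ and $\norm{a}_2^2=\norm{f}_2^2$, $\norm{b}_2^2=\norm{g}_2^2$ by orthonormality of $\{r_k\}$. (Equivalently, one may invoke the Lagrange identity.)

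It then remains to compare this against the target inequality. Since the $(\norm{f}_2^2-\norm{g}_2^2)^2$ term occurs with coefficient $1$ on both sides and cancels, and $\delta>0$ divides out of the rest, the whole lemma reduces to the purely algebraic statement
\[
\norm{a}_2^4+\norm{b}_2^4-2|\langle a,b\rangle|^2\ \ge\ \norm{a}_2^2\norm{b}_2^2-|\langle a,b\rangle|^2,
\]
i.e.\ $\norm{a}_2^4+\norm{b}_2^4-\norm{a}_2^2\norm{b}_2^2\ge|\langle a,b\rangle|^2$. This follows immediately from Cauchy--Schwarz, $|\langle a,b\rangle|^2\le\norm{a}_2^2\norm{b}_2^2$, together with the elementary bound $\norm{a}_2^4+\norm{b}_2^4\ge2\norm{a}_2^2\norm{b}_2^2$. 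I do not expect a genuine obstacle here: the only place demanding care is the bookkeeping in the orthogonal splitting---in particular checking that distinct ordered pairs yield orthogonal products, so that $r_i\overline{r_j}\perp r_j\overline{r_i}$ for $i\ne j$, which is exactly what \eqref{H1} asserts---and correctly identifying the off-diagonal-plus-diagonal sum with the squared Hilbert--Schmidt norm of $aa^*-bb^*$ so that the trace identity applies.
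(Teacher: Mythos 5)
Your proof is correct and follows essentially the same route as the paper's: the orthogonal expansion of $|f|^2-|g|^2$ from \eqref{expansion}, the lower bounds $\norm{s_k}_2^2\ge\delta$ and $\norm{r_i\overline{r_j}}_2^2\ge\delta$ from \eqref{H3}, and the Lagrange-type identity for the coefficient sums, which you package as the Hilbert--Schmidt norm of $aa^*-bb^*$. The only difference is cosmetic: the paper rewrites $\norm{a}_2^4+\norm{b}_2^4-2|\langle a,b\rangle|^2$ exactly as $2\big[\norm{a}_2^2\norm{b}_2^2-|\langle a,b\rangle|^2\big]+(\norm{a}_2^2-\norm{b}_2^2)^2$ (yielding the slightly stronger constant $2\delta$ and an exact cancellation of the diagonal terms), whereas you finish with Cauchy--Schwarz and AM--GM, which also works.
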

We prove Proposition~\ref{prop1}  assuming Lemma~\ref{lemma:lemma}, and then prove Lemma~\ref{lemma:lemma} below.

\begin{proof}[Proof of Proposition~\ref{prop1}]
By multiplying by scalars and interchanging the roles of $f,g$
if necessary, we may assume with no loss of generality that $\norm{f}_2\le\norm{g}_2=1$.
By Cauchy-Schwarz, 
\begin{equation} \label{inequality}
    \begin{split}
         \norm{|f|^2-|g|^2}_2 
\le \big\|\,|f|+|g|\,\big\|_4 \cdot \big\|\,|f|-|g|\,\big\|_4
&\le C(\norm{f}_2 + \norm{g}_2)\,\big\|\,|f|-|g|\,\big\|_4
\\
&\le 2C\,\big\|\,|f|-|g|\,\big\|_4.
    \end{split}
\end{equation}
Write $f = re^{i\theta}g+h$ with $r\ge 0$, $\theta\in\reals$, and $h\perp g$.
Then $|\langle f,g\rangle|^2 = r^2$
and
\begin{equation} \label{another_identity} \norm{f}_2^2\norm{g}_2^2 - |\langle f,g\rangle|^2
= (r^2+\norm{h}_2^2)- r^2 = \norm{h}_2^2.\end{equation}
Let $\delta \in(0,1]$ be a parameter for which
the conclusion	\eqref{lemma3_inequality} of Lemma~\ref{lemma:lemma} holds. Inserting \eqref{another_identity} into \eqref{lemma3_inequality} gives
\begin{equation*} 
	\delta \norm{h}_2^2 +	(1-r^2-\norm{h}_2^2)^2
\le \norm{|f|^2-|g|^2}_2^2 \le 4C^2\norm{|f|-|g|}_4^2. \end{equation*}
Therefore since $0<\delta\le 1$,
\begin{equation*}
\delta \norm{h}_2^2 +	\tfrac14\delta (1-r^2-\norm{h}_2^2)^2
\le 4C^2\norm{|f|-|g|}_4^2. \end{equation*}
The left-hand side is 
\[ (\delta - \tfrac12\delta(1-r^2)) \norm{h}_2^2 + \tfrac14\delta (1-r^2)^2 + \tfrac14\delta\norm{h}_2^4
\ge \tfrac12\delta\norm{h}_2^2 + \tfrac14\delta(1-r^2)^2\]
and therefore since $(1-r)\le (1-r^2)$,
\[ \norm{h}_2^2 + (1-r)^2 \le 16 C^2 \delta^{-1} \norm{|f|-|g|}_4^2.\] 
Defining $z = e^{i\theta}$,
$\norm{f-zg}_2^2 = \norm{h}_2^2 + (1-r)^2$ and therefore
\[ \norm{f-zg}_2^2 \le 16 C^2 \delta^{-1}  \norm{|f|-|g|}_4^2.\]
Since $f-zg\in V$, its $L^4$ norm is majorized by a constant multiple of
its $\lt$ norm. Thus
$\norm{f-zg}_4\le C'\norm{|f|-|g|}_4$ for another finite constant $C'$ 
which depends on $\delta$.
\end{proof}

\begin{proof}[Proof of Lemma~\ref{lemma:lemma}]
Under the hypothesis that $\norm{r_j}_2=1$, 
$\norm{s_j}_2^2 = \norm{r_j}_4^4-1$.
Therefore the hypothesis $\inf_j \norm{r_j}_4^4\ge 1+\delta$
is equivalent to $\inf_j \norm{s_j}_2^2 \ge\delta$.

Express $f,g\in V$ as $f = \sum_k a_k r_k$ and $g = \sum_k b_k r_k$.
By \eqref{expansion},
\begin{align} |f|^2-|g|^2 
\label{lemma3_first}
= \sum_{i\ne j} (a_i \overline{a_j}-b_i\overline{b_j}) r_i\overline{r_j}
	+  (\norm{f}_2^2-\norm{g}_2^2)\one 
+ \sum_{k} (|a_k|^2-|b_k|^2) s_k
\end{align}
where $\one$ is the constant function $1$.
The functions $\one$, $s_k$, and $r_i\overline{r_j}$ with $i\ne j$ are pairwise orthogonal
by hypothesis \eqref{H1}.  Therefore
\begin{align} \norm{|f|^2-|g|^2}_2^2 
&= \sum_{k} ||a_k|^2-|b_k|^2|^2  \norm{s_k}_2^2
+ (\norm{f}_2^2-\norm{g}_2^2)^2
+ \sum_{i\ne j} |a_i \overline{a_j}-b_i\overline{b_j}|^2 \norm{r_i\overline{r_j}}_2^2
\label{lemma3_2nd_display}
\\&
\ge \delta \sum_{k} ||a_k|^2-|b_k|^2|^2  
+ (\norm{f}_2^2-\norm{g}_2^2)^2
+ \delta \sum_{i\ne j} |a_i \overline{a_j}-b_i\overline{b_j}|^2
 \notag
\end{align}
by hypothesis \eqref{H3}.

Algebraic manipulation of the last term on the right-hand side gives
\begin{align*} 
\sum_{i\ne j} |a_i \overline{a_j}-b_i\overline{b_j}|^2
	& = (\sum_k |a_k|^2)^2 
+ (\sum_k |b_k|^2)^2 
-2|\sum_k a_k \overline{b_k}|^2
- \sum_k (|a_k|^2-|b_k|^2)^2.
\\& = \norm{f}_2^4 
+ \norm{g}_2^4 
-2|\langle f,g\rangle|^2
- \sum_k (|a_k|^2-|b_k|^2)^2
\\& = 
2\Big[ \norm{f}_2^2 \norm{g}_2^2 - |\langle f,g\rangle|^2 \Big]
+ (\norm{f}_2^2-\norm{g}_2^2)^2
- \sum_k (|a_k|^2-|b_k|^2)^2.
\end{align*}
Substituting this expression into the preceding lower bound, two terms cancel, leaving 
\begin{align*} \norm{|f|^2-|g|^2}_2^2 
& \ge 2\delta \Big[ \norm{f}_2^2 \norm{g}_2^2 - |\langle f,g\rangle|^2 \Big]
+ (1+\delta) (\norm{f}_2^2-\norm{g}_2^2)^2
\\&
\ge  2\delta\Big[ \norm{f}_2^2 \norm{g}_2^2 - |\langle f,g\rangle|^2 \Big]
+  (\norm{f}_2^2-\norm{g}_2^2)^2.
\end{align*}
\end{proof}

A well-known theme is the analogy between lacunary Fourier series and sums
of independent random variables. Our next two examples express this theme.

\begin{example} \label{example2}
Let $N\ge 2$ and let $P\in\lt([0,1],\complex)$ be a trigonometric polynomial 
\[P(x) = \sum_{k=1}^N \alpha_k e^{2\pi i kx}\]
with coefficients $\alpha_k\in\complex$. 
Suppose that $|P|$ is not constant.
Let $A\in\naturals$ satisfy $A>2N$.  
Let $V\subset \lt([0,1],\complex)$ be the closure of the span of $\{P(A^n x): n\in\naturals\}$.
Then $V$ satisfies both $L^4$-SPR and $\lt$-SPR.
\end{example}

Example~\ref{example2} is an instance of Corollary~\ref{cor:L6}, with arbitrarily large $q<\infty$.
Verification of the hypotheses of the corollary is left to the reader.
The $L^q$ norm inequality \eqref{RH} holds since
$\sum_{n=1}^\infty a_n \sum_{k=1}^N \alpha_k e^{2\pi i A^n kx}$
is a sum of $N$ lacunary Fourier series,
and since any lacunary series with $\ell^2$ coefficients defines
a function in $L^q$ for all $q<\infty$.

The next example is a real analogue of Example~\ref{example2}.

\begin{example} \label{example3}
The closure of the subspace of $\lt([0,1],\reals)$ spanned by $\{\sin(2\pi 4^n x): n\in\naturals\}$
satisfies $L^4$-SPR  and $\lt$-SPR.
\end{example}

Example~\ref{example:rudin}, below, is a more efficient version of Example~\ref{example3}.

If complex rather than real linear combinations are allowed,
then phase retrieval cannot hold in Example~\ref{example3},
nor in any example with two real-valued basis functions $r,r'$.
Indeed, $f=r+ir'$ and $g = \overline{f} =  r-ir'$ satisfy $|f|\equiv |g|$, 
but $f$ is not a constant multiple of $g$.

Proposition~\ref{prop1}  and Corollary~\ref{cor:L6} do not apply to
Example~\ref{example3}, since with $r_n(x) = 2^{1/2}\,\sin(2\pi 4^n x)$
one has $r_i\overline{r_j} = r_j\overline{r_i}$ for all $i,j$.
However, a small modification of the reasoning underlying those two results gives 
Proposition~\ref{prop1B}, whose hypotheses are satisfied in Example~\ref{example3}.  

For Hilbert spaces $\lt(\mu,\reals)$ of real-valued functions with orthonormal bases of real-valued functions $r_n$ we modify the orthogonality hypothesis \eqref{H1} as follows: \begin{equation} \label{H1B} \big\{\one,\, s_i,\, r_j r_k: i,j,k\in\naturals \text{ and } j < k\big\} \text{ is an orthogonal set.} \end{equation} 

\begin{proposition} \label{prop1B}
Let $\mu$ be a probability measure.
Let $\{r_j\}\subset\lt(\mu)$ be an orthonormal set of real-valued functions
satisfying hypotheses \eqref{H2},\eqref{H3},\eqref{H1B}. 
Then the closure $V\subset\lt(\mu,\reals)$ of the span of $\{r_j: j\in\naturals\}$ over $\reals$
satisfies real $L^4$-SPR.

If there exist $q>4$ and $C<\infty$ such that the $L^q$ norm inequality \eqref{RH} holds 
for all functions in $V$
then $V$ satisfies real $\lt$-SPR.
\end{proposition}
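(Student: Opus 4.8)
The plan is to mirror, in the real-valued setting, the three-step argument used to prove Proposition~\ref{prop1} and Corollary~\ref{cor:L6}: first establish the real analogue of Lemma~\ref{lemma:lemma}, then deduce real $L^4$-SPR exactly as in the proof of Proposition~\ref{prop1} (replacing the unimodular phase $z=e^{i\theta}$ by a sign $z=\pm1$), and finally upgrade to real $\lt$-SPR by Hölder interpolation as in Corollary~\ref{cor:L6}. The only structural change is that, since the $r_j$ are real, $r_jr_k=r_kr_j$, so the orthogonal family indexing the off-diagonal part of $|f|^2-|g|^2$ is $\{r_jr_k:j<k\}$ rather than $\{r_j\overline{r_k}:j\ne k\}$; this is precisely why \eqref{H1} is replaced by \eqref{H1B}, and it introduces a harmless factor in one coefficient.

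I would first record the expansion analogous to \eqref{expansion}: for real $f=\sum_k a_kr_k$ one has $f^2=\sum_k a_k^2 s_k+\norm{f}_2^2\one+2\sum_{i<j}a_ia_j\,r_ir_j$, a pairwise orthogonal decomposition by \eqref{H1B}. The same estimate that gives \eqref{L4bound}, now using $\norm{r_ir_j}_2\le\norm{r_i}_4\norm{r_j}_4$ and $\norm{s_k}_2\le 1+\norm{r_k}_4^2$ together with \eqref{H2}, again yields $V\subset L^4$ with $\norm{f}_4\le C\norm{f}_2$. Writing $f=\sum_k a_kr_k$ and $g=\sum_k b_kr_k$ with real coefficients, subtracting, and using orthogonality gives
\[
\norm{|f|^2-|g|^2}_2^2
=4\sum_{i<j}(a_ia_j-b_ib_j)^2\norm{r_ir_j}_2^2
+\sum_k(a_k^2-b_k^2)^2\norm{s_k}_2^2
+(\norm{f}_2^2-\norm{g}_2^2)^2,
\]
which by \eqref{H3} is at least $4\delta\sum_{i<j}(a_ia_j-b_ib_j)^2+\delta\sum_k(a_k^2-b_k^2)^2+(\norm{f}_2^2-\norm{g}_2^2)^2$. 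The elementary identity $\sum_{i\ne j}(a_ia_j-b_ib_j)^2=\norm{f}_2^4+\norm{g}_2^4-2\langle f,g\rangle^2-\sum_k(a_k^2-b_k^2)^2$ (the real form of the manipulation in the proof of Lemma~\ref{lemma:lemma}), combined with $\sum_{i\ne j}=2\sum_{i<j}$, gives $4\sum_{i<j}(a_ia_j-b_ib_j)^2+2\sum_k(a_k^2-b_k^2)^2=4[\norm{f}_2^2\norm{g}_2^2-\langle f,g\rangle^2]+2(\norm{f}_2^2-\norm{g}_2^2)^2$. Substituting to lower-bound the off-diagonal contribution yields the real analogue of \eqref{lemma3_inequality}, namely $\norm{|f|^2-|g|^2}_2^2\ge\delta[\norm{f}_2^2\norm{g}_2^2-\langle f,g\rangle^2]+(\norm{f}_2^2-\norm{g}_2^2)^2$.

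Granting this inequality, I would run the proof of Proposition~\ref{prop1} with essentially no change: normalize $\norm{f}_2\le\norm{g}_2=1$, apply Cauchy--Schwarz and \eqref{L4bound} to get $\norm{|f|^2-|g|^2}_2\le 2C\norm{|f|-|g|}_4$, and write $f=zrg+h$ with $z\in\{\pm1\}$, $r\ge0$, and $h\perp g$, so that $\norm{f}_2^2\norm{g}_2^2-\langle f,g\rangle^2=\norm{h}_2^2$. With this choice of sign, $\min(\norm{f-g}_2^2,\norm{f+g}_2^2)=\norm{f-zg}_2^2=(1-r)^2+\norm{h}_2^2$, and the same algebra as in Proposition~\ref{prop1} produces $\norm{h}_2^2+(1-r)^2\le 16C^2\delta^{-1}\norm{|f|-|g|}_4^2$; one more application of \eqref{L4bound} to $f-zg\in V$ yields real $L^4$-SPR. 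The second assertion then follows verbatim from the proof of Corollary~\ref{cor:L6}: Hölder's inequality with the exponent $\theta$ defined by $\tfrac14=\tfrac\theta2+\tfrac{1-\theta}{q}$ together with \eqref{RH} converts the $L^4$ bound into $L^2$-Hölder-SPR, and \cite[Corollary 3.12]{pineau-taylor}, which holds in the real case, upgrades this to real $\lt$-SPR.

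The computations are routine; the only point requiring care is the bookkeeping in the real analogue of Lemma~\ref{lemma:lemma}. Because $r_ir_j$ and $r_jr_i$ coincide, summing over unordered pairs $i<j$ produces the factor $4$ (rather than the factor $1$ attached to the ordered sum in the complex case), so when one rewrites the off-diagonal sum in terms of $\norm{f}_2^2\norm{g}_2^2-\langle f,g\rangle^2$ a term $-2\delta\sum_k(a_k^2-b_k^2)^2$ appears, and one must check that the single diagonal term $\delta\sum_k(a_k^2-b_k^2)^2$ supplied by the $s_k$ suffices. Keeping the modest coefficient $\delta$ (rather than greedily extracting $4\delta$) on the $\norm{h}_2^2=\norm{f}_2^2\norm{g}_2^2-\langle f,g\rangle^2$ term leaves the surplus $3\delta\sum_{i<j}(a_ia_j-b_ib_j)^2+\tfrac\delta2\sum_k(a_k^2-b_k^2)^2+\tfrac\delta2(\norm{f}_2^2-\norm{g}_2^2)^2\ge0$, so the inequality closes comfortably and the remainder of the argument is unaffected.
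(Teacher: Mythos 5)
Your proof is correct and follows essentially the same route as the paper, which treats Proposition~\ref{prop1B} by rerunning the proofs of Lemma~\ref{lemma:lemma}, Proposition~\ref{prop1}, and Corollary~\ref{cor:L6} with the orthogonal family $\{r_ir_j : i<j\}$ in place of $\{r_i\overline{r_j} : i\ne j\}$, noting only that the resulting factor of $2$ on the off-diagonal sum is favorable. Your explicit bookkeeping --- checking that the enlarged correction $-2\delta\sum_k(a_k^2-b_k^2)^2$ arising from the algebraic identity is absorbed, with nonnegative surplus --- is just that same observation carried out in detail.
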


The only changes from the proof of Proposition~\ref{prop1} are that 
in \eqref{lemma3_first}, the first term becomes $2\sum_{i<j} (a_ia_j-b_ib_j)r_ir_j$,
and consequently that on the right-hand side of \eqref{lemma3_2nd_display},
the last term is changed to 
\[ 4\sum_{i<j} (a_ia_j-b_ib_j)^2 \norm{r_ir_j}_2^2
= 2 \sum_{i\ne j} |a_ia_j-b_ib_j|^2 \norm{r_i\overline{r_j}}_2^2.
\]
The corresponding quantity in the proof of Proposition~\ref{prop1}
is $ \sum_{i\ne j} |a_ia_j-b_ib_j|^2 \norm{r_i\overline{r_j}}_2^2$.
The new factor of $2$ thus arising is favorable for our purpose.
\qed

\medskip
If $4^n$ is replaced by $3^n$ or $2^n$ in Example~\ref{example3}
then Proposition~\ref{prop1B} no longer applies. Indeed, if $3^n$
is used the desired orthogonality
between $s_n$ and $r_{n+1}r_n$ fails to hold; 
$e^{2\pi i \cdot 2\cdot 3^n x}$
occurs with nonzero coefficient in the
Fourier series for $s_n$, while $e^{2\pi i \cdot 3^{n+1}x}\cdot e^{-2\pi i \cdot 3^n x}
= e^{2\pi i \cdot 2\cdot 3^n x}$ also
occurs with nonzero coefficient in the Fourier series for $r_{n+1}r_n$. 
A similar issue arises for $2^n$.

Another application of Proposition~\ref{prop1B} is a real analogue of Example~\ref{example1}.

\begin{example} \label{example_real1}
Let $\mu$ be a probability measure.
Let $q>4$ be an even integer.
Let $r_n$ be independent identically distributed real-valued random variables in $L^q(\mu)$
satisfying $\norm{r_n}_{\lt}=1$. 
Assume that 
\begin{equation} \left\{ \begin{aligned}
	&r_n\perp\one  
	\\ &\mu(\{x: |r_n(x)|\ne 1\})>0.  
\end{aligned} \right. \end{equation}
Then $\{r_n\}$ satisfies
the hypotheses of Proposition~\ref{prop1B},
and consequently the closure of its span in $L^2(\mu,\reals)$ 
satisfies real $L^4$-SPR and real $\lt$-SPR.
\end{example}

\medskip
We proceed by lightly modifying a construction of
Rudin \cite{rudin} to create examples of trigonometric series related
to the theory of $\Lambda(p)$ sets
that satisfy stable phase retrieval, yet are rather far from being lacunary in nature.
To simplify matters, we set this example in 
the ambient Hilbert space $L^2([0,1]\times[0,1],\complex)$, with respect to
two-dimensional Lebesgue measure, rather than in $L^2([0,1],\complex)$. Define $r_\nu$ to be
\begin{equation} \label{rudin-type} 
r_\nu(x,y) = 2^{1/2} \sin(2\pi \nu y)\,e^{2\pi i n_\nu x}, 
\end{equation}
where $(n_\nu: \nu\in\naturals)$ is a subsequence of $\naturals$ 
to be specified.

To quantify the asymptotic density of a subsequence $(n_\nu)$ of $\naturals$, define
$\alpha(N)$ to be the number of indices $\nu$  satisfying $n_\nu\le N$.

\begin{example} \label{example:rudin}
There exists a strictly increasing sequence $(n_\nu: \nu\in\naturals)$, satisfying 
the asymptotic density lower bound
$\limsup_{N\to\infty} N^{-1/2}\alpha(N)>0$
such that the closed subspace $V$ of $\lt([0,1]\times[0,1])$ 
spanned by the functions $r_\nu$ defined in \eqref{rudin-type}
satisfies $L^4$-SPR.

There exists such a sequence satisfying 
$\limsup_{N\to\infty} N^{-1/3}\alpha(N)>0$ such that $V$ also satisfies $\lt$-SPR. 
\end{example}

Thus these sequences $(n_\nu)$ are far denser than lacunary sequences.

\begin{proof}
In \S4.7 of \cite{rudin}, Rudin constructs a sequence $n_\nu$ that satisfies
$\limsup N^{-1/2}\alpha(N)>0$
such that $n_i+n_j = n_k+n_l$ if and only if $(i,j)$ is a permutation of $(k,l)$,
and deduces from this property the inequality
$\norm{f}_4\le C\norm{f}_2$ for all $\lt$ functions of the form
$f(x) = \sum_\nu c_\nu e^{2\pi i n_\nu x}$. 
Let $(n_\nu)$ be any such sequence, and define $\{r_\nu\}$ by \eqref{rudin-type}.
Hypothesis \eqref{H2}, the uniform upper bound for $\norm{r_\nu}_4$, certainly holds.
The nonconstant factors $\sin(2\pi \nu y)$ 
ensure a uniform lower bound $\norm{r_\nu}_4^4\ge 1+\delta$,
	so \eqref{H3} holds.

To verify hypothesis \eqref{H1}, first consider any inner product
$\langle r_{j}\overline{r_k},\,r_l\overline{r_m}\rangle$
with $j\ne k$ and $l\ne m$.
Calculation of this inner product produces a factor of
$\int_0^1 e^{2\pi i (n_j-n_k-n_l+n_m)x}\,dx$, which vanishes
unless $n_j-n_k-n_l+n_m=0$. Equivalently, 
$n_j+n_m = n_l+n_k$. Therefore by Rudin's construction, $(l,k)$ is a permutation
of $(j,m)$. If $j\ne k$, this implies that $(j,k) = (l,m)$. 
The associated functions $s_{k}(x,y) = 2\sin^2(2\pi ky)-1 = - \cos(4\pi ky)$
are independent of $x$, hence satisfy $s_k\perp r_i\overline{r_j}$ whenever $i\ne j$.
Finally, if $k\ne l$ the  $s_k\perp s_l$
since $\cos(4\pi ky)\perp \cos(4\pi ly)$ in $L^2([0,1])$.

Rudin \cite{rudin} likewise constructs a sequence satisfying $\limsup N^{-1/3}\alpha(N)>0$,
satisfying the same conditions in the preceding paragraph, and satisfying 
$\norm{\sum_\nu b_\nu e^{2\pi i n_\nu x}}_6\le C\norm{b}_{\ell^2}$
for all coefficient sequences $b\in\ell^2$.  Consequently
for any function $f(x,y)$ of the form 
$\sum_\nu a_\nu \sin(2\pi\nu y)e^{2\pi i n_\nu x}$ with $a\in\ell^2$,
\begin{align*} \int_{[0,1]}\int_{[0,1]}\big|\sum_\nu a_\nu \sin(2\pi\nu y)e^{2\pi i n_\nu x}\big|^6\,dx\,dy
& \le C \int_{[0,1]} (\sum_\nu |a_\nu\sin(2\pi\nu y)|^2)^{6/2}\,dy
\\ & 
\le C\int_{[0,1]} (\sum_\nu |a_\nu|^2)^{3}\,dy
=C \norm{a}_{\ell ^2}^6 \leq 8 C \norm{f}_{\lt}^6.  \end{align*}

In each of these two situations, $V$ has the indicated properties.
\end{proof}

\medskip \noindent {\bf Remark.}\ 
In this example, the subspace $V$ is in a sense larger, relative to
other ambient subspaces naturally associated to it,
than is the case for corresponding examples involving lacunary series.
To formulate this assertion more precisely,
for each degree $D\in\naturals$ 
let $V_{N,D}$ be the subspace of $\lt$ spanned by polynomials of degrees $\le D$ in 
$\{r_\nu: 1\le \nu\le N\}$. 
Let $N$ tend to infinity, while $D$ remains fixed.
The dimensions $\dim(V_{N,D})$ satisfy
$\liminf_{N\to\infty} N^{-3}\dim (V_{N,D})<\infty$ for any $D$ 
in Example~\ref{example:rudin}, 
while for the lacunary series example $r_\nu = 2^{1/2}\sin(2\pi 4^\nu x)$,
$\dim(V_{N,D})$ has order of magnitude $N^D$.
Thus the span of $\{r_\nu: 1\le \nu\le N\}$, for these $N$, is a comparatively
large subspace of the associated spaces $V_{N,D}$ in Example~\ref{example:rudin}.
\\

We conclude by giving an example of a subset that  satisfies H\"older-stable phase retrieval
and is invariant under multiplication by unimodular scalars, but is not a subspace.
The aforementioned theorem of Freeman, Oikhberg, Pineau and Taylor \cite{pineau-taylor}  applies only to subspaces,
so we are unable to upgrade the conclusion from H\"older-SPR to Lipschitz-SPR.

 \begin{example}\label{nonlinear example}
Let $\Lambda\subset \mathbb{Z}$, and let $E$ be the set of all $f\in L^2([0,1],\mathbb{C})$ such that $\widehat{f}$ is supported on $\Lambda.$ Suppose that $\Lambda$ has the property that if $n_j\in \Lambda$ and $n_1-n_2=n_3-n_4$ then either $n_1=n_2$ or $n_1=n_3.$  Fix $c=(c_n)_{n\in\Lambda}\in \ell^2(\Lambda)_+$ and define $$E_c=\{f\in E : |\widehat{f}|=c\}=\Big\{\sum_{n\in \Lambda}\gamma_nc_ne^{2\pi inx} : \gamma_n\in \mathbb{C} \ \text{and}\ \ |\gamma_n|=1 \ \forall n\in \Lambda\Big\}.$$
Then $E_c$, equipped with the $L^4$ norm, satisfies \eqref{gSPR} with $\gamma=1$. Moreover,  if for some $q>4$ all $f\in E$ satisfy the $L^q$ bound $\|f\|_{q}\leq C'_{\Lambda}\|f\|_{2}$ then $E_c$ also satisfies   \eqref{gSPR} with $p=2$ and $\gamma = \frac{q-4}{2q-4}$. 
 \end{example}
 \begin{proof}
   We begin by noting that $E\subset L^4([0,1],\mathbb{C})$ and $\|f\|_4\leq C_\Lambda \|f\|_2$ for all $f\in E$. To prove our claim that $E_c$ satisfies \eqref{gSPR} with $p=4$ and $\gamma=1$,  notice that we may assume without loss of generality that $\|c\|_{\ell^2}=1$. In this case, $\|f\|_2=\|g\|_2=1$, and
$$\||f|^2-|g|^2\|_2\leq \||f|-|g|\|_4\||f|+|g|\|_4\leq 2C_{\Lambda}\||f|-|g|\|_4.$$
  We claim that the following identity holds for $f,g\in E$:
 \begin{multline}\label{alg identity for SPR}
     \left|\|f\|_2^2-\|g\|_2^2\right|^2+\left[\|f\|_2^4+\|g\|_2^4-2|\langle f,g\rangle|^2\right]
	\\ =\||f|^2-|g|^2\|_2^2+\sum_{m\in \Lambda}\left(|\widehat{f}(m)|^2-|\widehat{g}(m)|^2\right)^2.
 \end{multline}
 The identity \eqref{alg identity for SPR} implies that $E_c$ satisfies  $L^4$-Lipschitz-stable phase retrieval. Indeed, the second term on the right-hand side of \eqref{alg identity for SPR} vanishes. Since $f,g\in E_c$, they have equal $L^2$ norms, implying that the first term on the left-hand side vanishes. Write $f=re^{i\theta}g+h$, with $0\leq r\leq 1$, $\theta\in \mathbb{R}$, and $h\perp g$. Then, $\|f\|_2^2\|g\|_2^2-|\langle f,g\rangle|^2=\|h\|_2^2=1-r^2.$ To finish the proof,  note that $\|f-e^{i\theta}g\|_2^2=\|h\|_2^2+(1-r)^2\leq 2\|h\|_2^2$,  use the inequality $\|f\|_4\leq C_{\Lambda}\|f\|_2$, and combine the above inequalities.

The derivation of \eqref{alg identity for SPR} is similar to the proof of Lemma~\ref{lemma:lemma}, but easier. The details are left to the reader.  That the supplementary $L^q$ bound  implies that $E_c$ satisfies   \eqref{gSPR} with $p=2$ and $\gamma = \frac{q-4}{2q-4}$ follows from an invocation of H\"older's inequality similar to 
 the one in the proof of Corollary~\ref{cor:L6}. 
 \end{proof}

 \noindent {\bf Remark.}\  The subspace $E$ in Example~\ref{nonlinear example} will not satisfy phase retrieval unless $\Lambda$ has cardinality at most one, as if $m,n\in \Lambda$ and $f=e^{2\pi i nx}$ and $g=e^{2\pi imx}$ then $|f|\equiv|g|$. Observe that on the Fourier side, $f,g$ are disjoint unit vectors in $\ell^2$ when $m\neq n$. Subsets of the form $E_c$ have an opposite  behavior on the Fourier side and appear in the study of random Fourier series.

\bigskip The authors thank Professors Calderbank, Daubechies, Freeman, and Freeman
for generously sharing their work in advance of its formal release.

\end{document}